\documentclass[10pt, reqno]{amsart}
\usepackage{lipsum}
\usepackage{a4wide}
\usepackage{amssymb} 
\usepackage{amsmath}
\usepackage{amsthm} 
\usepackage{tikz} 
\usepackage{amscd}
\usepackage{hyperref}
\usepackage{enumitem}
\hypersetup{colorlinks,linkcolor={blue},citecolor={blue},urlcolor={red}}
\theoremstyle{plain}
\numberwithin{equation}{section}

\newcommand{\res}{\operatorname{res}}
\newcommand{\ho}{\operatorname{Hom}}

\newtheorem{theorem}{Theorem}[section]
 
\newtheorem{lemma}[theorem]{Lemma}
\newtheorem{remark}[theorem]{Remark}
\newtheorem{proposition}[theorem]{Proposition}

\newtheorem{definition}[theorem]{Definition}

\newtheorem{hypothesis}{Hypothesis}[section]

\setlength{\parskip}{.2em}
\title{A note on branching of $V(\rho)$}
\author{ Santosh Nadimpalli and
Santosha Pattanayak}
\date{\today}
\begin{document}
\begin{abstract}
  Let $\mathfrak{g}$ be a complex simple Lie algebra and let $\mathfrak{g}_0$
  be the sub-algebra fixed by a diagram automorphism of $\mathfrak{g}$.
  Let $G$ be the complex, simply-connected, simple algebraic group with
  Lie algebra $\mathfrak{g}$, and let $G_0$ be the connected subgroup
  of $G$ with Lie algebra $\mathfrak{g}_0$.  Let $\rho$ be
  the half sum of positive roots of $\mathfrak{g}$. In this article,
  we give a necessary and sufficient condition for a highest weight
  $\mathfrak{g}_0$-representation $V_0(d\mu)$ to occur in the
  representation $\res_{\mathfrak{g}_0}V(d\rho)$, for any saturation
  factor $d$ of the pair $(G_0, G)$.
  \end{abstract}
  \maketitle
  \section{Introduction}
  Let $\mathfrak{g}$ be a complex simple Lie algebra, and let $\rho$
  be the half sum of positive roots, for some choice of positive roots
  of the root system of $\mathfrak{g}$ with respect to a Cartan
  subalgebra $\mathfrak{h}$ and a Borel subalgebra $\mathfrak{b}$,
  with $\mathfrak{h}\subset \mathfrak{b}$. Let $\mu$ be a dominant
  weight with respect to this chosen set of positive roots, and let
  $V(\mu)$ be the irreducible representation of $\mathfrak{g}$ with
  highest weight $\mu$.  A conjecture of Kostant predicts that the
  representation $V(\mu)$ occurs in $V(\rho)\otimes V(\rho)$ for all
  $\mu\preceq 2\rho$ with respect to the dominance order. In the
  article \cite{components}, the authors show that $V(d\mu)$ occurs in
  $V(d\rho)\otimes V(d\rho)$ for all $\mu\preceq 2\rho$, and for any
  {\it saturation factor} $d$ associated to the Lie algebra
  $\mathfrak{g}$ (see \cite[Definition 11]{kumar_survey}).  The
  results of the article \cite{components} are related to the branching
  of the $\mathfrak{g}\times \mathfrak{g}$ representation
  $V(\rho)\otimes V(\rho)$ to the diagonal embedding of $\mathfrak{g}$
  in $\mathfrak{g}\times \mathfrak{g}$.  In this article, we consider
  a similar branching problem for symmetric pairs associated to
  diagram automorphisms.

  Let $\Phi$ be the set of roots of $\mathfrak{g}$ with respect to
  $\mathfrak{h}$. Let $\{X_\alpha:\alpha\in \Phi\}$ be a Chevalley
  basis for the Lie algebra $\mathfrak{g}$. Let $\theta$ be a
  non-trivial automorphism fixing the pinning
  $(\mathfrak{g}, \mathfrak{b}, \mathfrak{h}, \{X_\alpha\})$. Let
  $\mathfrak{g}_0$ be the fixed point Lie sub-algebra
  $\{X\in \mathfrak{g}:\theta(X)=X\}$. Let $\mathfrak{h}_0$ be the
  Cartan sub-algebra $\mathfrak{g}_0\cap \mathfrak{h}$ of
  $\mathfrak{g}_0$, and let $\mathfrak{b}_0$ be the Borel sub-algebra
  $\mathfrak{g}_0\cap \mathfrak{b}$ of $\mathfrak{g}_0$. Let $\Phi_0$
  be the set of roots of $\mathfrak{g}_0$ with respect to
  $\mathfrak{h}_0$. Let $\rho_0$ be the half sum of positive roots for
  the choice of $\mathfrak{b}_0$ as a Borel sub-algebra of
  $\mathfrak{g}_0$.  Let $G$ be the complex, simply-connected, simple
  algebraic group with Lie algebra $\mathfrak{g}$, and let $G_0$ be
  the connected subgroup of $G$ with Lie algebra $\mathfrak{g}_0$.

  Let $d$ be a saturation factor for the pair $(G_0, G)$ (see
  \cite[Definition 12]{kumar_survey} or see Definition
  \ref{sat_fac}). Our main theorem gives a necessary and sufficient
  condition for a $\mathfrak{g}_0$-representation $V_0(d\mu)$ to occur
  in the restriction $\res_{\mathfrak{g}_0}V(d\rho)$. Let
  $p:\mathfrak{h}^\ast\rightarrow \mathfrak{h}_0^\ast$ be the
  restriction map. A subset $\Gamma$ of $\Phi_0$ is said to be a root
  subsystem if $\Gamma$ is a root system in the Euclidean space
  spanned by $\Gamma$. We do not necessarily assume that $\Gamma$ is
  closed in $\Phi$. For instance, the set of short roots $\Phi_{0,s}$
  of $\Phi_{0}$ is a root subsystem of $\Phi_0$ but it is not
  necessarily closed.
  In this note, we show that $V_0(d\mu)$ occurs in the restriction
  $\res_{\mathfrak{g}_0}V(d\rho)$ if and only if $\mu\preceq p(\rho)$
  in the dominance order of $\Phi_{0,s}$ and $\mu-(2\rho_0-p(\rho))$
  is dominant in $\Phi_{0,s}$. Here, the ordering in $\Phi_{0,s}$ is
  induced by the Borel sub-algebra $\mathfrak{b}_0$ of
  $\mathfrak{g}_0$. The condition that $\mu\preceq p(\rho)$ is
  analogous to $\mu\preceq 2\rho$ in the tensor product case.

  Kostant's conjecture is inspired by the
  $\mathfrak{g}$-representation on the exterior algebra
  $\wedge^{\bullet} \mathfrak{g}$. If $\mathfrak{g}$ has a diagram
  automorphism $\theta$ of order two, we set $\mathfrak{g}_1$ to be
  the $-1$ eigenspace of $\theta$. We then get
  $\wedge^\bullet\mathfrak{g}=\wedge^\bullet\mathfrak{g}_0\otimes
  \wedge^\bullet\mathfrak{g}_1$. Based on the work of Kostant
  (\cite[Proposition 20, Corollary 36]{kostant_clifford}) and
  Panyushev (\cite[Theorem 6.5]{exterior}), we obtain that
  \begin{equation}\label{panyushev_idnetity}
    \res_{\mathfrak{g}_0}V(\rho) \simeq
    V_0(\rho_0)\otimes V_0(p(\rho)-\rho_0)
  \end{equation}
  as $\mathfrak{g}_0$ modules. The results of this article are
  motivated by the above identity. Indeed if the saturation factor $d$
  of the pair $(G_0, G)$ is equal to one, our results
  describe the decomposition of the tensor product $V_0(\rho_0)\otimes
  V_0(p(\rho)-\rho_0)$ of $\mathfrak{g}_0$-representations. 
  
\section{A decomposition of \texorpdfstring{$V(\rho)$}{}}
Let $B$ and $T$ be the Borel subgroup and the maximal torus of $G$
such that the Lie algebras of $B$ and $T$ are $\mathfrak{b}$ and
$\mathfrak{h}$ respectively. Let $Z$ be the center of $G$. Let
$\Phi^\vee$ be the set of co-roots of $\mathfrak{g}$ with respect to
$\mathfrak{h}$. Let $\Phi^+$ (resp. $(\Phi^\vee)^+$) be the set of
positive roots (resp. co-roots) of $\Phi$ (resp. $\Phi^\vee$) for the
choice of the Borel subalgebra $\mathfrak{b}$. Let $\Delta$ and
$\Delta^\vee$ be a system of simple roots and simple co-roots in
$\Phi$ and $\Phi^\vee$ respectively. Let $G_0$ be the connected
subgroup of $G$ with Lie algebra $\mathfrak{g}_0$.  We denote by $B_0$
and $T_0$ the connected components containing identity of the groups
$B\cap G_0$ and $T\cap G_0$ respectively. Let $\Phi_0^+$ be the set of
positive roots of $\Phi_0$ with respect to the Borel subalgebra
$\mathfrak{b}_0$, and let $\Delta_0$ be the set of simple roots in
$\Phi^+_0$. Let ${\rm X}^\ast(T)$ (resp.  ${\rm X}^\ast(T_0)$) be the
character lattice of $T$ (resp. $T_0$).

Let $P\subset \mathfrak{h}^\ast$ and $P_0\subset \mathfrak{h}_0^\ast$
be the weight lattices of $\mathfrak{g}$ and $\mathfrak{g}_0$
respectively.  Let $P^+$ be the set of dominant weights of
$\mathfrak{g}$ with respect to the choices of $\mathfrak{h}$ and
$\mathfrak{b}$. For $\lambda \in P^+$, let $V(\lambda)$ be the
irreducible representation of $\mathfrak{g}$ with highest weight
$\lambda$. For $\mu\in P$, we denote by $V(\lambda)_\mu$ the
$\mu$-weight subspace of $V(\lambda)$.  The formal character
associated to the representation $V(\lambda)$, defined by the formal
sum $\sum_{\mu\in P}\dim(V(\lambda)_\mu)e^{\mu}$, is denoted by
$ch_\lambda$.  Similarly, let $P_0^+$ be the set of dominant weights
of $\mathfrak{g}_0$ with respect to the choice of $\mathfrak{h}_0$ and
$\mathfrak{b}_0$. Recall that we have
$p:\mathfrak{h}^\ast\rightarrow \mathfrak{h}_0^\ast$, the restriction
map.

Since $\mathfrak{g}_0$ is a simple Lie algebra but not simply laced,
$\Phi_0$ has two root lengths.  Let $\Phi_{0, s}$ (resp.
$\Phi_{0, l}$) be the set of short (resp. long) roots in
$\Phi_0$. Note that $\Phi_{0,s}$ and $\Phi_{0, l}$ are root subsystems
of $\Phi_0$, and the latter subsystem is closed.  We set
$\Phi_{0, s}^+$ to be the set $\Phi_0^+ \cap \Phi_{0, s}$. Let
$P_{0, s}^+$ be the set of dominant weights of the sub-root system
$\Phi_{0, s}$ with respect to the choice of positive roots
$\Phi_{0, s}^+$. The set $P_{0, s}^+$ is contained in $P_0^+$. We
write $\mu\preceq_s \lambda$, if $\lambda-\mu$ is a non-negative
integral linear combination of positive short roots. Let
$\rho_s=\frac{1}{2}\sum_{\beta \in \Phi_{0,s}^+}\beta$ and let
$\rho_l=\frac{1}{2}\sum_{\beta \in \Phi_{0,l}^+}\beta$. Let
$\Delta_{0, s}$ be the set $\Delta_0 \cap \Phi_{0, s}$ and let
$\Delta_{0,l}$ be the set of simple roots
$\Delta_0\backslash \Delta_{0,s}$.  We see that
$s_\alpha(\rho_s)=\rho_s-\alpha$, for $\alpha\in \Delta_{0, s}$ and
$s_\alpha(\rho_s)=\rho_s$ for $\alpha\in \Delta_{0, l}$, where
$s_\alpha$ is the reflection corresponding to the root $\alpha$. We
then have $\rho_{s}=\sum_{\alpha\in \Delta_{0, s}}\varpi_\alpha$ and
$\rho_l=\rho_0-\rho_s$. We note that $p(\rho)-\rho_0=\rho_s$ if
$\theta$ is of order $2$ and $p(\rho)-\rho_0=2\rho_s$ if $\theta$ has
order $3$. Let $(\mathfrak{g}_s, \mathfrak{g}_s\cap \mathfrak{h})$ be
the simple Lie algebra with its Cartan sub-algebra corresponding to
the root system $\Phi_{0, s}$.

We come to the crucial definition of saturation factor
associated to the pair $(G_0,G)$. 
\begin{definition}\label{sat_fac}
  A positive integer $d$ is called a {\it saturation factor} for the
  pair $(G_0, G)$ if for any $(\mu, \lambda)\in P_0^+\times P^+$ such
  that $\mu(z)\lambda(z)=1$, for all $z\in Z\cap G_0$, and
  $\ho_{G_0}(V_0(N\mu), V(N\lambda))\neq 0$ for some positive integer
  $N$, then $\ho_{G_0}(V_0(d\mu), V(d\lambda))\neq 0$.
\end{definition}
Let $C(G_0,G)$ be the set of all pairs
$(\mu, \lambda)\in P_0^+\times P^+$ such that
$\ho_{G_0}(V_0(N\mu), V(N\lambda))\neq 0$ for some positive integer
$N$. The set $C(G_0, G)$ is called an eigencone of the pair
$(G_0, G)$. Note that the finite dimensional representations of $G_0$
are self dual. Hence, we have
$$\dim_\mathbb{C}\ho_{G_0}(V_0(N\mu), V(N\lambda))
=\dim_\mathbb{C}(V_0(N\mu)\otimes V(N\lambda))^{G_0}.$$
Our result (Theorem \ref{thm_main}) generalises the main result of
\cite{components} on the components of the form $V(d\lambda)$ in the
tensor product $V(d\rho)\otimes V(d\rho)$, where $d$ is a saturation
factor of the group $G$ and $\lambda \in P^+$. We use Ressayre's work
(see \cite{ressayre}) on saturation problem in the context of
branching to prove the following theorem. 
\begin{theorem}\label{thm_main}
  Let $d$ be a saturation factor of the pair $(G_0, G)$, and let
  $\mu\in P_0^+$. The representation $V_0(d\mu)$ occurs in
  $\res_{\mathfrak{g}_0}V(d\rho)$ if and only if $\mu=\rho_0+\beta$
  where $\beta$ occurs as a weight of the
  $\mathfrak{g}_0$-representation $V_0(p(\rho)-\rho_0)$.
\end{theorem}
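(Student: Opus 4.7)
My plan is to use Definition~\ref{sat_fac} to reduce the question to characterizing when $(\mu,\rho) \in C(G_0,G)$, and then to combine the Kostant--Panyushev identity \eqref{panyushev_idnetity} with Ressayre's description of the branching cone. Indeed, by the saturation property, $V_0(d\mu)$ occurs in $\res_{\mathfrak{g}_0} V(d\rho)$ if and only if $(\mu,\rho) \in C(G_0,G)$, so the theorem amounts to a description of this set.

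For the ``if'' direction I would prove the stronger statement that $V_0(\mu) \hookrightarrow V(\rho)$ whenever $\mu = \rho_0 + \beta$ for some weight $\beta$ of $V_0(p(\rho)-\rho_0)$. Via \eqref{panyushev_idnetity} this becomes a statement about the tensor product $V_0(\rho_0) \otimes V_0(p(\rho)-\rho_0)$, to which I would apply the Racah--Speiser (Brauer--Klimyk) algorithm. The pair $(w = \id,\beta)$ produces a copy of $V_0(\rho_0 + \beta) = V_0(\mu)$, since $\mu + \rho_0 = 2\rho_0 + \beta$ is strongly dominant. To rule out cancellations from other weights $\beta'$, one checks that $2\rho_0 + \beta'$ is weakly dominant for every weight $\beta'$ of $V_0(p(\rho)-\rho_0)$, that is, $\langle \beta', \alpha^\vee \rangle \geq -2$ for every $\alpha \in \Delta_0$; this follows from $p(\rho)-\rho_0 \in \{\rho_s, 2\rho_s\}$ being a small dominant weight. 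Once $V_0(\mu) \hookrightarrow V(\rho)$ is secured, Definition~\ref{sat_fac} immediately gives $V_0(d\mu) \hookrightarrow V(d\rho)$.

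For the ``only if'' direction, assume $(\mu,\rho) \in C(G_0,G)$. Here I would apply Ressayre's description of the branching cone by Schubert-theoretic inequalities to identify the cross-section $\{\mu : (\mu,\rho) \in \overline{C(G_0,G)}\}$ with the translated weight polytope $\rho_0 + \operatorname{Conv}(W_0 \cdot (p(\rho)-\rho_0))$; this matches the equivalent formulation in the introduction, namely $\mu \preceq p(\rho)$ and $\mu - (2\rho_0 - p(\rho))$ dominant, both relative to $\Phi_{0,s}$. One then identifies the dominant lattice points of this shifted polytope with $\rho_0$-translates of weights of $V_0(p(\rho)-\rho_0)$: for $V_0(\rho_s)$ (order-$2$ case) the representation is essentially minuscule for $\Phi_{0,s}$ and the identification is automatic, while for $V_0(2\rho_s)$ (order-$3$ case with $\mathfrak{g}_0 = G_2$) a direct verification of the interior weights is required.

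The main obstacle is precisely this ``only if'' step: aligning Ressayre's abstract cone-theoretic inequalities with the explicit weight-polytope description of the eigencone cross-section, and in particular handling the interior lattice points in the non-minuscule $G_2$ case. The ``if'' direction, by contrast, reduces to a clean Racah--Speiser computation that exploits the smallness of $p(\rho) - \rho_0$.
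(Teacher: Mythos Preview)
Your ``if'' direction contains a genuine gap. The claim that $2\rho_0+\beta'$ is weakly dominant for every weight $\beta'$ of $V_0(p(\rho)-\rho_0)$, i.e.\ that $\langle\beta',\alpha^\vee\rangle\geq -2$ for all $\alpha\in\Delta_0$, is false. Take $(\mathfrak{g},\mathfrak{g}_0)=(A_5,C_3)$, so $p(\rho)-\rho_0=\rho_s=(2,1,0)$. With $\Psi=\{e_1-e_2,\,e_1-e_3,\,e_1+e_3\}\subset\Phi_{0,s}^+$ one has $\beta'=\rho_s-{\rm sum}(\Psi)=(-1,2,0)$, which is a weight of $V_0(\rho_s)$ by Lemma~\ref{lemma}, yet $\langle\beta',\alpha_1^\vee\rangle=-3$. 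Then $2\rho_0+\beta'=(5,6,2)$ is not dominant and contributes with a sign in the Brauer--Klimyk expansion, so your ``no cancellation'' mechanism breaks down. The net multiplicities may still come out positive, but that is precisely what has to be proved and your argument does not do it. A related issue: for the order-$3$ case you invoke \eqref{panyushev_idnetity}, but that identity is only established for involutions; when $\theta$ has order~$3$ the quotient $p(ch_\rho)/ch_{\rho_0}$ equals $ch(V_0(\rho_s))^2$, not $ch(V_0(2\rho_s))$, so $\res_{\mathfrak{g}_0}V(\rho)\not\simeq V_0(\rho_0)\otimes V_0(2\rho_s)$ in general.

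The paper avoids both problems by arguing the ``if'' direction inside the eigencone rather than at $N=1$. One checks Ressayre's inequalities $I_{\delta,w',w}$ directly: using the equality $(p(w^{-1}\rho)+p(\rho))(\dot\delta)=(\rho_0-w'^{-1}\rho_0)(\dot\delta)$ from condition~(2), the left side of $I_{\delta,w',w}$ becomes $(\rho_0-p(\rho)+w'^{-1}\beta)(\dot\delta)$, and since $w'^{-1}\beta$ is again a weight of $V_0(p(\rho)-\rho_0)$ one has $w'^{-1}\beta\preceq p(\rho)-\rho_0$, whence the inequality is $\leq 0$ because $\dot\delta$ is dominant. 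This uses neither \eqref{panyushev_idnetity} nor any Brauer--Klimyk positivity.

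For the ``only if'' direction the paper also takes a different, more elementary route than you sketch: rather than invoking Ressayre's cone description, it factors $p(ch_{d\rho})$ via the Weyl character formula as $ch_{d\rho_0}$ times a $W_0$-invariant product over short roots, writes the latter as $\sum_i n_i\,ch_{\mu_i}$ with each $\mu_i\preceq d(p(\rho)-\rho_0)$, and then uses a convex-hull argument to force $\mu-\rho_0$ into the weight polytope of $V_0(p(\rho)-\rho_0)$. Your proposal to extract the polytope description directly from the Ressayre inequalities is plausible but remains a sketch, and in any case the character-formula argument is both shorter and independent of the saturation machinery.
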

Before we prove the theorem, we recall some notations.  For any
$G_0$-dominant one parameter subgroup $\delta$ in $T_0$, let
$P_0(\delta)$ (resp. $P(\delta)$) be the parabolic subgroup associated
to $\delta$ inside $G_0$ (resp. $G$). Let $W_{0,P_0(\delta)}$
(resp. $W_{P(\delta)}$) be the Weyl group of $P_0(\delta)$
(resp. $P(\delta)$) and let $W_0^{P_0(\delta)}$
(resp. $W^{P(\delta)}$) be the set of minimal length representatives
in the cosets of $W_0/W_{0,P_0(\delta)}$
(resp. $W/W_{P(\delta)}$). Let
$i: G_0/P_0(\delta) \rightarrow G/P(\delta)$ be the natural
$G_0$-equivariant embedding and let
$i^*: H^*(G/P(\delta), \mathbb Z) \rightarrow H^*(G_0/P_0(\delta),
\mathbb Z)$ be the corresponding morphism of cohomology rings. For
$w' \in W_0^{P_0(\delta)}$ (resp.  $w \in W^{P(\delta)}$), let
$[X_{w'}] \in H^*(G_0/P_0(\delta), \mathbb Z)$ (resp.
$[X_w] \in H^*(G_0/P_0(\delta), \mathbb Z)$) be the cohomology class
of the sub-variety
$X_{w'}:=\overline{ B_0w'P_0(\delta)} \subseteq G_0/P_0(\delta)$
(resp. $X_{w}:=\overline{BwP(\delta)} \subseteq G/P(\delta)$).  A one
parameter subgroup in $T_0$ is said to be admissible if the hyperplane
of ${\rm X}_{\mathbb R}^*(T_0):={\rm X}^*(T_0) \otimes \mathbb R$
which it defines is spanned by the weights of the representation of
$T_0$ in $\mathfrak{g}/ \mathfrak{g}_0$ (see \cite[D\'efinition
4.7]{brion_res_bour}). For any complex algebraic group $H$, and for
any one parameter subgroup $\gamma:{\rm G}_m\rightarrow H$, we denote
by $\dot{\gamma}\in {\rm Lie}(H)$, the element $d\gamma(1)$.
\begin{proof}[Proof of Theorem \ref{thm_main}]
  We will use the results proved by Ressayre (see \cite [Theorem
  A]{ressayre}), to show that for any dominant weight $\mu$ such that
  $\mu=\rho_0+\beta$ with $V_0(p(\rho)-\rho_0)_\beta\neq 0$, the
  representation $V_0(d\mu)$ occurs in the representation
  $\res_{G_0}V(d\rho)$, where $d$ is any saturation factor for the
  pair $(G_0, G)$. We will use the exposition in Brion's Bourbaki
  article \cite{brion_res_bour}.

  The pair $(\lambda, \rho)\in P_0^+\times P^+$ belongs to $C(G_0, G)$
  if and only if, for any admissible and dominant one parameter
  subgroup $\delta$ of $T_0$ and for any
  $(w',w) \in W_0^{P_0(\delta)} \times W^{P(\delta)}$ such that
  \begin{enumerate}
  \item
    $i^*([X_w]).[X_{w'}]=[X_e] \in H^*(G_0/P_0(\delta), \mathbb Z)$
    and
  \item
    $(p(w^{-1}\rho)+p(\rho))(\dot{\delta})=(\rho_0-w'^{-1}\rho_0)(\dot{\delta})$,
    \end{enumerate}
 the following inequalities are satisfied:
  $$I_{\delta, w', w}:(p(w^{-1}\rho)+w'^{-1}\lambda)(\dot{\delta})
  \leq 0.$$ For the above result we refer to \cite[Th\'eor\`eme
  4.9]{brion_res_bour}.  We will show that condition $(2)$ already
  implies the inequalities $I_{\delta, w', w}$, and we will not
  explicitly use the cohomological condition $(1)$. Note that
  condition $(1)$ implies that
  $$(p(w^{-1}\rho)+p(\rho))(\dot{\delta})\leq
 ( \rho_0-w'^{-1}\rho_0)(\dot{\delta}).$$ (see \cite[Proposition
  2.3]{ressayre_branch} and \cite[equations
  (30)-(31)]{{kumar_survey}}). For the above results one may as well
  refer to the survey article: \cite[Theorem 33]{kumar_survey}.

  Let $(w',w) \in W_0^{P_0(\delta)} \times W^{P(\delta)}$ and $\delta$
  be as above.
  Since $(\rho_0-w'^{-1}\rho_0)(\dot{\delta})$ is equal to
  $(p(w^{-1}\rho)+p(\rho))(\dot{\delta})$, we have
  $$(p(w^{-1}\rho)+w'^{-1}\lambda)(\dot{\delta}) =
  (\rho_0-w'^{-1}\rho_0-p(\rho)+w'^{-1}\lambda)(\dot{\delta}).$$ 

  We apply these results to the present case, i.e., $\lambda=\mu$,
  where $\mu=\rho_0+\beta$ and $V_0(p(\rho)-\rho_0)_\beta\neq 0$. Note
  that $\mu\in {\rm X}^\ast(T_0)\subseteq P_0$. We have,
  $w'^{-1}\mu=w'^{-1}\rho_0+w'^{-1}\beta$. Since
  $V_0(p(\rho)-\rho_0)_{w'^{-1}\beta}\neq 0$ we have
  $w'^{-1}\beta \preceq p(\rho)-\rho_0$.  Thus,
  \begin{align*}
    (p(w^{-1}\rho)+w'^{-1}\mu)(\dot{\delta})=&
  (\rho_0-w'^{-1}\rho_0-p(\rho)+w'^{-1}\rho_0+w'^{-1}\beta)(\dot{\delta})\\
  &\leq
   (\rho_0-w'^{-1}\rho_0-p(\rho)+w'^{-1}\rho_0+p(\rho)-\rho_0)(\dot{\delta})=0.
  \end{align*}
  Hence, $(\mu, \rho)$ belongs to the eigencone $C(G_0, G)$. From the
  assumption that $\mu=\rho_0+\beta$, where
  $V_0(p(\rho)-\rho_0)_\beta\neq 0$, we get that $p(\rho)-\mu$ is
  contained in the root lattice of $G_0$. Thus we get that
  $\mu(z)\rho(z)=1$, for all $z\in Z\cap G_0$. We conclude that
  $V_0(d\mu)$ occurs as a sub-representation of
  $\res_{\mathfrak{g}_0}V(d\rho)$, for any saturation factor $d$ of
  the pair $(G_0, G)$.

  Conversely, assume that $V_0(d\mu)$ is contained in the representation
  $\res_{\mathfrak{g}_0}V(d\rho)$. Note that
  $\Phi_0^+=\Phi_{0,s}^+ \cup \Phi_{0,l}^+$. The restriction of the map
  $p:\mathfrak{h}^\ast\rightarrow (\mathfrak{h}_0)^\ast$, to the set $\Phi^+$
  surjects onto $\Phi_0^+$ and the preimage of a short root has cardinality
  equal to the order of $\theta$ and the preimage of a long root has
  cardinality $1$. So, from the Weyl Character formula, we have
  $$p(ch_{d\rho})=\prod_{\beta\in \Phi_0^+}
  (e^{d\beta/2}+\cdots+e^{-d\beta/2})^{m(\beta)},$$ where $m(\beta)$
  is the order of $\theta$ (which is either $2$ or $3$) if $\beta$ is
  short, and $m(\beta)=1$ if $\beta$ is long. Hence, we get that
$$p(ch_{d\rho})=\prod_{\beta\in \Phi_{0, s}^+}
(e^{d\beta/2}+\cdots+e^{-d\beta/2})^{m(\beta)-1}ch_{d\rho_0}.$$ Since
$p(ch_{d\rho})/ch_{d\rho_0}$ is invariant under the Weyl group, $W_0$,
of $G_0$, we get that
$$\prod_{\beta\in \Phi_{0, s}^+}
(e^{d\beta/2}+\cdots+e^{-d\beta/2})^{m(\beta)-1}=\sum_{i}n_ich_{\mu_i},$$
for some $n_i\in \mathbb{Z}$, and $\mu_i\preceq dp(\rho)-d\rho_0$
because $\rho_s=p(\rho)-\rho_0$ when $\theta$ has order $2$ and
$2\rho_s=p(\rho)-\rho_0$ when $\theta$ has order $3$. Therefore, we
get that $V_0(d\mu)$ is contained in $V_0(\mu_i)\otimes V(d\rho_0)$, where
$\mu_i\preceq dp(\rho)-d\rho_0$. Hence, the weight $d\mu$ is equal to
$d\rho_0+\beta$ where $V_0(\mu_i)_\beta\neq 0$. Now $d\mu-d\rho_0$
belongs to the convex hull of $W_0(\mu_i)$. Then, $d\mu-d\rho_0$
belongs to the convex hull of $W_0(dp(\rho)-d\rho_0)$. Thus we get
that $\mu-\rho_0$ belongs to the convex hull of
$W_0(p(\rho)-\rho_0)$. Since $\mu-\rho_0$ is a weight, and it belongs
to $W_0(p(\rho)-\rho_0)$, we get that $\mu-\rho_0$ is comparable to
$p(\rho)-\rho_0$ (see \cite[Proposition 8.44]{hall_lie}). Thus,
$V_0(p(\rho)-\rho_0)_{\mu-\rho_0}\neq 0$. Here, $V_0(p(\rho)-\rho_0)$ is
considered as a $\mathfrak{g}_0$-representation. This completes the
proof of the theorem.
\end{proof}
\begin{remark}\normalfont
  The above theorem holds true for any automorphism $\theta$ of order
  $2$ with some modification in the set up.  We need to choose a
  $\theta$-stable Cartan subalgebra $\mathfrak{h}$ and a
  $\theta$-stable Borel sub-algebra $\mathfrak{b}$ of $\mathfrak{g}$
  and the rest of the notations are as defined above. The proof is
  also the same. However, in what follows, we focus on the case of
  diagram automorphisms--in which case we describe the weights $\beta$
  occurring in the $\mathfrak{g}_0$-representation $V_0(p(\rho)-\rho_0)$
  such that $\rho_0+\beta$ is dominant.
   \end{remark}
\begin{remark}\normalfont
  Let us consider the case where $\mathfrak{g}$ is a simple Lie
  algebra of type ${\rm D}_4$, and let $\mathfrak{g}_0$ be a
  sub-algebra of type ${\rm G}_2$, fixed by a triality automorphism.
  Using the methods as in the above theorem, we can see that the
  irreducible sub-representations of $\res_{\mathfrak{g}_0}V(\rho)$
  are exactly the sub-representations of
  $$V_0(\rho_0)\otimes
  V_0(p(\rho)-\rho_0).$$
  In this small rank case, using Klimyk's
  formula, we check that the irreducible sub-representations which
  occur in the above tensor product are precisely the following set:
  $$\{V_0(\rho_0+\beta):\rho_0+\beta\in P_0^+
  \,\, \text{and} \,\, V_0(p(\rho)-\rho_0)_\beta\neq 0\}.$$
  \end{remark}

  In the following Lemma and Proposition, we assume that $\theta$ is
  of order $2$. The following lemma gives a characterisation of the
  weights occurring in the $\mathfrak{g}_0$-representation
  $V_0(p(\rho)-\rho_0)$. For any subset $\Psi\subseteq \Phi_0$, we
  denote by ${\rm sum}(\Psi)$ the vector $\sum_{v\in \Psi}v$.
\begin{lemma}\label{lemma}
  The space $V_0(p(\rho)-\rho_0)_\beta\neq 0$ if and only if
  $\beta=p(\rho)-\rho_0-{\rm sum}(\Psi)$, where $\Psi$ is a subset of
  $\Phi^+_{s, 0}$.
\end{lemma}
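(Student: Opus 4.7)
The plan is to compute the character $ch(V_0(\rho_s))$ explicitly and read off its nonzero weights, combining the identity \eqref{panyushev_idnetity} with the product expansion of $p(ch_\rho)$ already established in the proof of Theorem \ref{thm_main}. Since $\theta$ has order $2$ we have $p(\rho)-\rho_0=\rho_s$, so the lemma amounts to saying that the weights of $V_0(\rho_s)$ are precisely $\rho_s-{\rm sum}(\Psi)$ as $\Psi$ ranges over subsets of $\Phi_{0,s}^+$.

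First, I would specialise the factorisation of $p(ch_{d\rho})$ from the proof of Theorem \ref{thm_main} to $d=1$. With $m(\beta)=2$ for short $\beta$ and $m(\beta)=1$ for long $\beta$, the factor $(e^{d\beta/2}+\cdots+e^{-d\beta/2})^{m(\beta)-1}$ collapses to $e^{\beta/2}+e^{-\beta/2}$ for short $\beta$ and to $1$ for long $\beta$. This yields
$$p(ch_\rho)=ch_{\rho_0}\cdot\prod_{\beta\in\Phi_{0,s}^+}\bigl(e^{\beta/2}+e^{-\beta/2}\bigr).$$
On the other hand, the identity \eqref{panyushev_idnetity} reads $\res_{\mathfrak{g}_0}V(\rho)\cong V_0(\rho_0)\otimes V_0(\rho_s)$ in the order-$2$ case, so taking characters and cancelling the nonzero factor $ch_{\rho_0}$ in the integral domain $\mathbb{Z}[P_0]$ gives
$$ch(V_0(\rho_s))=\prod_{\beta\in\Phi_{0,s}^+}\bigl(e^{\beta/2}+e^{-\beta/2}\bigr).$$

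The final step is a direct expansion: pulling out $e^{\rho_s}=\prod_{\beta\in\Phi_{0,s}^+}e^{\beta/2}$ rewrites the product as $\sum_{\Psi\subseteq\Phi_{0,s}^+}e^{\rho_s-{\rm sum}(\Psi)}$. Every coefficient on the right is a non-negative integer, so a weight $\beta$ occurs in $V_0(\rho_s)=V_0(p(\rho)-\rho_0)$ precisely when $\beta=\rho_s-{\rm sum}(\Psi)=p(\rho)-\rho_0-{\rm sum}(\Psi)$ for some $\Psi\subseteq\Phi_{0,s}^+$, which is the lemma. I do not anticipate a genuine obstacle: the character identity and the combinatorial expansion are both routine, and the substantive input—namely \eqref{panyushev_idnetity} together with the Weyl character computation in the proof of Theorem \ref{thm_main}—has already been secured.
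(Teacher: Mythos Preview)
Your argument is correct and follows the same two–step skeleton as the paper: first identify
\[
ch\bigl(V_0(p(\rho)-\rho_0)\bigr)=\prod_{\beta\in\Phi_{0,s}^+}\bigl(e^{\beta/2}+e^{-\beta/2}\bigr),
\]
and then read off the weight set from this product. The differences are in how each step is justified. For the character formula, you deduce it from the Panyushev identity \eqref{panyushev_idnetity} together with the $d=1$ case of the product expansion of $p(ch_{d\rho})$ in the proof of Theorem~\ref{thm_main}, cancelling $ch_{\rho_0}$ in the integral domain $\mathbb{Z}[P_0]$; the paper instead states the formula and identifies it with the Weyl character of the irreducible $\mathfrak{g}_s$-module $V(\rho_s)$. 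For the weight set, you expand the product directly as $\sum_{\Psi\subseteq\Phi_{0,s}^+}e^{\rho_s-{\rm sum}(\Psi)}$ and observe that all coefficients are non-negative; the paper instead invokes Kostant's description of the weights of $V(\rho_s)$ (\cite[Lemma~5.9]{lie_alg_cohomology}). Your route is more self-contained given what the paper has already proved, while the paper's is terser by outsourcing the combinatorics to Kostant.
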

\begin{proof}
  Note that the character of the $\mathfrak{g}_0$-representation
  $V_0(p(\rho)-\rho_0)$ is given by
  $$\prod_{\beta\in \Phi_{s, 0}^+}(e^{\beta/2}+e^{-\beta/2}).$$
  The above is the formal character of the irreducible
  $\mathfrak{g}_{s}$-representation $V(\rho_s)$ with highest weight
  $\rho_s$. The lemma now follows from the result \cite[Lemma
  5.9]{lie_alg_cohomology} of Kostant.
\end{proof}

\begin{proposition}\label{proposition}
  Let $\mu\in P_0^+$. Then, $V_0(p(\rho)-\rho_0)_{\mu-\rho_0}\neq 0$ if
  and only if $\mu\preceq_s p(\rho)$ and
  $\mu-(2\rho_0-p(\rho))\in P_{0, s}^+$.
\end{proposition}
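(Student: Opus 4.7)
The plan is to use Lemma \ref{lemma} to rewrite the condition $V_0(p(\rho)-\rho_0)_{\mu-\rho_0}\neq 0$ as the existence of $\Psi\subseteq\Phi_{0,s}^+$ with $\mu=p(\rho)-\mathrm{sum}(\Psi)$, and then verify the equivalence with the two stated conditions. Because $\theta$ has order two, $2\rho_0-p(\rho)=\rho_l$, and since $p(\rho)=\rho_l+2\rho_s$ with $2\rho_s=\mathrm{sum}(\Phi_{0,s}^+)$, the equation rewrites as $\mu-\rho_l=\mathrm{sum}(\Phi_{0,s}^+\setminus\Psi)$. So the proposition reduces to showing: for $\mu\in P_0^+$, the weight $\mu-\rho_l$ is a sum of distinct positive short roots if and only if both $\mu\preceq_s p(\rho)$ and $\mu-\rho_l\in P_{0,s}^+$ hold.

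For the forward direction, $\mu\preceq_s p(\rho)$ is immediate from $p(\rho)-\mu=\mathrm{sum}(\Psi)$. To obtain $\mu-\rho_l\in P_{0,s}^+$, I would verify $\langle\mu-\rho_l,\beta^\vee\rangle\geq 0$ for each simple root $\beta$ of $\Phi_{0,s}$: when $\beta\in\Delta_{0,s}=\Delta_0\cap\Phi_{0,s}$ this follows from $\langle\rho_l,\beta^\vee\rangle=0$ together with $\mu\in P_0^+$, and for simple roots of $\Phi_{0,s}$ outside $\Delta_0$ one expands $\beta$ as a positive integer combination of elements of $\Delta_0$ and combines the resulting Cartan-integer relations with the dominance of $\mu$. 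For the converse, set $\sigma:=\mu-\rho_l$; the two conditions yield $\sigma\in P_{0,s}^+$, $\sigma\in Q_s$ (the root lattice of $\Phi_{0,s}$, forced by $\mu\preceq_s p(\rho)$), and $\sigma\preceq_s 2\rho_s$. I would then show $\sigma-\rho_s$ is a weight of $V_0(p(\rho)-\rho_0)$: it lies in the coset $\rho_s+Q_s$, and the dominance of $\sigma$ combined with $\sigma\preceq_s 2\rho_s$ places $\sigma-\rho_s$ in the convex hull of the Weyl orbit of $\rho_s$ (with respect to the Weyl group of $\Phi_{0,s}$). Applying Lemma \ref{lemma} then produces $\Psi'\subseteq\Phi_{0,s}^+$ with $\sigma-\rho_s=\rho_s-\mathrm{sum}(\Psi')$, whence $\mu-\rho_l=\mathrm{sum}(\Phi_{0,s}^+\setminus\Psi')$.

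I expect the main obstacle to be the dominance check in the forward direction for simple roots of $\Phi_{0,s}$ not contained in $\Delta_0$, where the change-of-basis between the simple coroots of $\Phi_{0,s}$ and of $\Phi_0$ is non-trivial. A related subtlety in the backward direction is confirming that the Weyl-dominant representative of $\sigma-\rho_s$ is $\preceq_s\rho_s$, which relies on the near-dominance bound $\langle\sigma-\rho_s,\beta^\vee\rangle\geq -1$ for each simple root $\beta$ of $\Phi_{0,s}$, a consequence of $\sigma\in P_{0,s}^+$. A case-by-case verification across the types $B_n$, $C_n$, and $F_4$---the only types arising from order-two diagram automorphisms---may ultimately be required.
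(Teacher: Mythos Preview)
Your approach mirrors the paper's. Both directions rest on Lemma~\ref{lemma} and the identification of the weights of $V_0(p(\rho)-\rho_0)$ with those of the $\mathfrak{g}_s$-representation $V(\rho_s)$, and both reduce the problem to studying $\mu-\rho_l=\rho_s+\beta$. For the implication ``conditions $\Rightarrow V_0(p(\rho)-\rho_0)_{\mu-\rho_0}\neq 0$,'' the paper simply cites \cite[Proposition~9]{components}; the convex-hull-plus-coset argument you sketch for $\sigma=\mu-\rho_l$ is precisely the content of that proposition, so you are reproving it inline rather than invoking it.

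The one substantive divergence is exactly where you predict difficulty. In establishing $\mu-\rho_l\in P_{0,s}^+$ from the nonvanishing hypothesis, the paper checks only the inequalities $\langle\mu-\rho_l,\alpha^\vee\rangle\geq 0$ for $\alpha\in\Delta_{0,s}=\Delta_0\cap\Phi_{0,s}$, using $\langle\rho_l,\alpha^\vee\rangle=0$ together with $\mu\in P_0^+$; it does \emph{not} treat simple roots of $\Phi_{0,s}$ lying outside $\Delta_0$. So your concern is not misplaced: there is no hidden uniform argument in the paper covering those extra simple roots. Either the intended reading of $P_{0,s}^+$ is dominance against $\Delta_{0,s}$ only---in which case your worry dissolves and no case analysis is needed---or the paper is silent on the very point you flag, and the type-by-type check you propose would fill that gap. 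In either reading, the structure of your argument is the same as the paper's.
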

\begin{proof} Assume that $\mu \preceq_s p(\rho)$. Then
  $\mu-(2\rho_0-p(\rho)) \preceq_s p(\rho)-(2\rho_0-p(\rho))=2\rho_s$.
  Since $\mu-(2\rho_0-p(\rho)) \in P_{0, s}^+$, using the result
  \cite[Proposition 9]{components} we get
  that $$\mu-(2\rho_0-p(\rho))=\rho_s+\beta$$ where
  $V(\rho_s)_{\beta}\neq 0$.  Here, $V(\rho_s)$ is the
  $\mathfrak{g}_s$-representation with highest weight $\rho_s$.  Since
  the weights occurring in the $\mathfrak{g}_s$-representation
  $V(\rho_s)$ are the same as the weights occurring in the
  $\mathfrak{g}_0$-representation $V_0(p(\rho)-\rho_0)$, we have
  $\mu=\rho_0+\beta$ where $V_0(p(\rho)-\rho_0)_{\beta}\neq 0$.
  
  Conversely, assume that $\mu=\rho_0+\beta$ where
  $V_0(p(\rho)-\rho_0)_{\beta}\neq 0$. By Lemma \ref{lemma}, we have
  $\beta= p(\rho)-\rho_0-{\rm sum}(\Psi)$, where $\Psi$ is a subset of
  $\Phi^+_{s, 0}$. So
  $p(\rho)-\mu=p(\rho)-\rho_0-\beta
  =\rho_s-\beta=\rho_s-p(\rho)+\rho_0+{\rm sum}(\Psi)={\rm
    sum}(\Psi)$. So, $p(\rho)-\mu$ is a sum of positive short
  roots. Hence $\mu \preceq_s p(\rho)$.
  
  Write $\rho_0=\rho_s+\rho_l$.  Then $\mu=\rho_s+\rho_l+\beta$ and
  $\mu-(2\rho_0-p(\rho))=\rho_s+\beta$. For $\alpha\in \Delta_{0, s}$,
  we have $\langle \rho_l, \alpha \rangle=0$, and hence,
  $$\langle \rho_s+\beta, \alpha \rangle=\langle \mu-\rho_l, \alpha
  \rangle=\langle \mu , \alpha \rangle\geq 0.$$ This completes the
  proof of the proposition.
\end{proof}
\begin{remark}\normalfont
  Although we assume that $\mathfrak{g}$ is simple, we may consider
  the case where $\mathfrak{g}=\mathfrak{g}_0\times \mathfrak{g}_0$
  and $\theta$ to be the permutation automorphism of the two simple
  factors. With the rest of the notations being similar we get that
  $p(\rho)=2\rho_0$. This situation is considered in the paper \cite
  [Proposition 9]{components}. The hypothesis of Proposition
  \ref{proposition} is similar to the hypothesis in \cite [Proposition
  9]{components}.  However, when $\theta$ is an outer involution of a
  simple Lie algebra $\mathfrak{g}$, i.e., for the pair
  $(\mathfrak{g}, \mathfrak{g}_0)$, where $\mathfrak{g}_0$ is the
  fixed point sub-algebra of an outer automorphism of a simple Lie
  algebra $\mathfrak{g}$, there exist dominant weights $\mu$ such that
  $p(\rho)-\mu$ is a non-negative integral linear combination of short
  positive roots in $\Phi_0$ but $V_0(p(\rho)-\rho_0)_{\mu-\rho_0}=0$.

  Note that $p(\rho)$ is in the root lattice except in the case where
  $\mathfrak{g}$ is of the type ${\rm A}_{2n-1}$ and $\mathfrak{g}_0$
  is of the type ${\rm C}_n$, for $n$ is odd. When $p(\rho)$ is in the
  root lattice, we take $\mu=0$. In the case where the pair
  $(\mathfrak{g}, \mathfrak{g}_0)$ is of type $(A_{2n-1}, C_n)$, for
  $n$ is odd, we take $\mu=(2n-1)\varpi_1$; here, $\varpi_1$ is the
  first fundamental weight of $\mathfrak{g}_0$. Here the numbering of
  fundamental weights is according to the conventions in \cite[PLANCHE
  III]{bourbaki_lie_4_6}. Then it is easy to see that $p(\rho)-\mu$ is
  a non-negative integral linear combination of short positive
  roots. But, $V_0(p(\rho)-\rho_0)_{\mu-\rho_0}=0$ since $p(\rho)-\mu$
  is not of the form $p(\rho)-\rho_0-{\rm sum}(\Psi)$, for any
  $\Psi\subseteq \Phi_{s, 0}^+$.

 \end{remark}
\begin{remark}\normalfont
  Let $\mathfrak{g}=\mathfrak{g}_0\oplus \mathfrak{g_1}$ be the
  eigen-decomposition of $\theta$. From Kostant's description of
  $\wedge^\bullet \mathfrak{g}$, and Panyushev's work on the description of
  $\wedge^\bullet \mathfrak{g}_1$, we get that
  $$\res_{\mathfrak{g}_0}V(\rho)
  \simeq V_0(\rho_0)\otimes V_0(p(\rho)-\rho_0)$$ as $\mathfrak{g}_0$
  modules.  If $V_0(\mu)$ occurs in $\res_{\mathfrak{g}_0}V(\rho)$, then
  $\mu=\rho_0+\beta$, where $V_0(p(\rho)-\rho_0)_\beta\neq 0$. If $d=1$,
  for instance, in the cases where $(\mathfrak{g}, \mathfrak{g}_0)$
  are of the type: $({\rm D}_{n}, {\rm B}_{n-1})$,
  $({\rm E}_{6}, {\rm F}_{4})$ and $({\rm A}_{2n-1}, {\rm C}_{n})$,
  for $2\leq n\leq 5$, (see \cite{ressayre_exper}) then the main
  theorem gives the converse of this statement. In general, using
  computations on SAGE, we observed that the representation $V_0(\mu)$
  occurs in $\res_{\mathfrak{g}_0}V(\rho)$ if and only if
  $V_0(p(\rho)-\rho_0)_{\mu-\rho_0}\neq 0$.
\end{remark}
{\bf Acknowledgements:} We would like to thank Prof. Michel Brion for
the careful reading of the manuscript and for some helpful
comments. We also thank Prof. Shrawan Kumar for some useful comments.

\bibliography{../biblio} \bibliographystyle{abbrv}
Department of Mathematics and Statistics,\\
    Indian Institute of Technology Kanpur,\\
    U.P. India, 208016.\\
    email:\ \texttt{nsantosh@iitk.ac.in}; \texttt{santosha@iitk.ac.in}
\end{document}